\theoremstyle{plain}
\newtheorem{thm}{Theorem}[section]
\newtheorem{Example}{Example}[section]
\theoremstyle{definition}
\newtheorem{defn}{Definition}[section]
\newtheorem{rem}{Remark}[section]
\begin{document}

\setcounter {page}{1}
\title{ Statistical convergence in metric-like spaces }

\author[ P. Malik, S. Das ]{Prasanta Malik*, and Saikat Das*\ }
\newcommand{\acr}{\newline\indent}
\maketitle
\address{{*\,} Department of Mathematics, The University of Burdwan, Golapbag, Burdwan-713104,
West Bengal, India.
                Email: pmjupm@yahoo.co.in, dassaikatsayhi@gmail.com \acr
           }

\maketitle

\textbf{AMS subject classification}: Primary 40A05, 40A35; Secondary 54A20.

\begin{abstract}
In this paper we introduce the notions of statistical
convergence and statistical Cauchyness of sequences in a metric-like space.
We study some basic properties of these notions.
\end{abstract}

\textbf{keywords :} natural density, statistical convergence, statistically Cauchy, metric-like space.


\section{\textbf{ Introduction and Background }}

The notion of convergence of sequences of real numbers was
extended to the notion of statistical convergence by Fast
\cite{Fa} (and also independently by Schoenberg \cite{Sc}), which
is primarily based on the concept of the natural density of the
subsets of the set of all natural numbers. any studies have been
done on this concept ( see \cite{Fr1, Fr2, Ko, Sl}).

On the other hand the concept of partial metric space was first
introduced by Matthews \cite{Matt}, as a generalization of the
usual notion of metric space. In \cite{Am}, A. A. Harandi
introduced the concept of metric-like space which is a
generalization of the concepts of metric space as well as partial
metric space and studied the notions of convergence and Cauchyness
of sequences in a metric-like space.

In this paper we introduce and study the notion of statistical
convergence of sequences in a metric-like space. Also introducing
the notion of statistical Cauchyness in a metric-like space we
have examined its relationship with statistical convergence.


\section{\textbf{ Basic Definitions and Notations }}

In this section we recall some basic definitions and notations which will be needed in our study.
Throughout the paper, $\mathbb{R}_{\geq 0}$ denotes the set of all non-negative real numbers.

\begin{defn} \cite{Am}: \label{ defn2}
Let $\mathcal X$ be a non-empty set and a mapping $\delta: \mathcal{ X \times X}
\rightarrow \mathbb{R}_{\geq 0}$  is said to be a metric-like on $\mathcal X$ if for
any $x, y, z \in \mathcal X$, the following conditions are satisfied :
\begin{itemize}
\item[$(\delta1)$] $\delta(x,y)= 0 \Rightarrow x=y;$
\item[$(\delta2)$] $\delta(x,y)= \delta(y,x);$
\item[$(\delta3)$] $\delta(x,z)\leq \delta(x,y) + \delta(y,z)$.
\end{itemize}

The pair $(\mathcal X,\delta)$ is then called a \textit{metric-like space}.
Throughout the paper, $(\mathcal X,\delta)$ will denote a metric-like space, unless
otherwise mentioned.
\end{defn}

\begin{rem}
We see that every \textit{metric space}
is a \textit{partial metric space} and that of every \textit{partial metric space} is a \textit{metric-like
space}, but the converse are not true.
\end{rem}

\begin{defn} \cite{Am}:
Let $x_{o}$ be a point in a metric-like space $(\mathcal X,\delta)$ and let $\epsilon > 0$.
Then the open $\delta$-ball with centered at $x_{o}$ and radius $\epsilon > 0$ in $(\mathcal X,\delta)$
is denoted by $\mathcal{B}_{\delta}(x_{o};\epsilon)$ and is defined by
\begin{align*}
\mathcal{B}_{\delta}(x_{o};\epsilon)= \{ x \in \mathcal{X}: \left|\delta(x,x_{o})-
\delta(x_{o},x_{o})\right|< \epsilon \}.
\end{align*}
\end{defn}

\begin{defn} \cite{Am}:
A sequence $\{x_{n}\}_{n\in \mathbb N}$ in a metric-like space $(\mathcal X,\delta)$ is said to be
convergent to a point $x_{o} (\in \mathcal X)$, if for every $\epsilon > 0$, there exists $k_{o}
\in \mathbb N$ such that
\begin{align*}
&\left|\delta(x_{n}, x_{o})- \delta(x_{o}, x_{o})\right|< \epsilon,  &\forall~ n\geq k_{o}\\
\mbox{i.e.}~ & x_{n} \in \mathcal{B}_{\delta}(x_{o};\epsilon),  &\forall~ n\geq k_{o}.
\end{align*}
In this case, we write $\displaystyle{\lim_{n\rightarrow \infty}}x_{n} = x_{o}$.
\end{defn}

\begin{defn} \cite{Am}
A sequence $\{x_{n}\}_{n\in \mathbb{N}}$ in a metric-like space $(\mathcal{X},\delta)$ is said to be Cauchy
(or $\delta$-Cauchy) if there exists $l \geq 0$ such that $\displaystyle{\lim_{m ,n\rightarrow \infty}}
\delta(x_{m},x_{n})= l$, i.e. for every $\epsilon > 0 ~\exists~ k_{o} \in \mathbb{N}$ such that
\begin{align*}
\left|\delta(x_{m},x_{n}) - l\right| < \epsilon, ~\forall~ m, n \geq k_{o}.
\end{align*}
\end{defn}

\begin{defn} \cite{Fr1}:
Let $\mathcal P$ be a subset of $\mathbb N$. For each $n \in
\mathbb{N}$, let $\mathcal{P}(n)$ denote the cardinality of the
set $\{k \leq n: k \in \mathcal{P}\}$. We say that the set
$\mathcal{P}$ has natural density $d(\mathcal{P})$ if the limit
$\displaystyle{\lim_{n\rightarrow
\infty}}\frac{\mathcal{P}(n)}{n}$ exists finitely and
\begin{align*}
d(\mathcal{P}) = \displaystyle{\lim_{n\rightarrow \infty}} \frac{\mathcal{P}(n)}{n}.
\end{align*}
\end{defn}

\begin{defn} \cite{Fr1}:
Let $\{x_{n}\}_{n\in \mathbb N}$ be a sequence of real numbers. Then $\{x_{n}\}_{n\in \mathbb N}$
is said to be statistically convergent to $x_{o} (\in \mathbb R)$, if for any $\epsilon > 0,~ d(\mathcal
A(\epsilon ))= 0$, where
\begin{align*}
\mathcal A(\epsilon)= \left\{n\in \mathbb N: \left|x_{n} - x_{o}\right|\geq \epsilon\right\}.
\end{align*}
In this case we write $st-\displaystyle{\lim_{n\rightarrow \infty}}x_{n}= x_{o}.$
\end{defn}


\section{\textbf{ Statistical convergence in a metric-like space}}

\begin{defn}: A sequence $\{x_{n}\}_{n\in \mathbb N}$ in a metric-like space ($\mathcal{X}, \delta$)
is said to be statistically convergent to a point $x_{o} (\in \mathcal X)$ if for
every $\epsilon > 0,~ d(\mathcal{A}(\epsilon))= 0$, where
\begin{align*}
\mathcal{A}(\epsilon) &= \{n \in \mathbb{N}: \left| \delta(x_{n},x_{o})- \delta(x_{o},x_{o})
\right|\geq \epsilon \} \\
& = \{n\in \mathbb{N}: x_{n}\in \mathcal{B}_{\delta}(x_{o},\epsilon)\}
\end{align*}
i.e. if $st-\displaystyle{\lim_{n\rightarrow \infty}} \delta(x_{n},x_{o}) = \delta(x_{o},x_{o})$.
In this case, we write $st-\displaystyle{\lim_{n\rightarrow \infty}} x_{n}= x_{o}$
\end{defn}

Limit of a statistically convergent sequence in a metric-like space may not be unique.

\begin{defn}
Let $(\mathcal{X},\delta)$ be a metric-like space. A sequence $\{x_{n}\}_{n \in \mathbb N}$ in $\mathcal{X}$
is said to be bounded in $\mathcal{X}$ if there exists $M > 0$ such that
\begin{align*}
\delta(x_{m},x_{n}) < M, ~\forall~ m, n \in \mathbb{N}.
\end{align*}
\end{defn}

\begin{thm} \label{ Theo 1}
A convergent sequence in a metric-like space $(\mathcal{X},\delta)$ is bounded.
\end{thm}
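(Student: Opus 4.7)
The plan is to use the convergence hypothesis with a fixed tolerance (say $\epsilon = 1$) to bound the tail of the sequence, then handle the finitely many early terms by direct enumeration, and finally combine the two via the triangle inequality $(\delta 3)$ to control $\delta(x_m, x_n)$ for all indices.

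More precisely, suppose $\{x_n\}$ converges to some $x_0 \in \mathcal X$. Applying the definition of convergence with $\epsilon = 1$, I would obtain some $k_0 \in \mathbb N$ such that
\[
\bigl|\delta(x_n, x_0) - \delta(x_0, x_0)\bigr| < 1, \quad \forall n \geq k_0,
\]
which in particular gives $\delta(x_n, x_0) < \delta(x_0, x_0) + 1$ for every $n \geq k_0$. Note that, unlike in a metric space, $\delta(x_0, x_0)$ need not vanish, so the term $\delta(x_0, x_0)$ must be kept in the bound; this is the one subtlety that distinguishes the argument from the classical metric setting.

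Next I would set
\[
K = \max\bigl\{\, \delta(x_1, x_0),\, \delta(x_2, x_0),\, \ldots,\, \delta(x_{k_0-1}, x_0),\, \delta(x_0, x_0) + 1 \,\bigr\},
\]
so that $\delta(x_n, x_0) \leq K$ for every $n \in \mathbb N$ (finitely many terms below the threshold are absorbed into the maximum, and the tail is controlled by the previous step). By the triangle inequality $(\delta 3)$ and symmetry $(\delta 2)$, for arbitrary $m, n \in \mathbb N$,
\[
\delta(x_m, x_n) \leq \delta(x_m, x_0) + \delta(x_0, x_n) \leq 2K.
\]
Taking $M = 2K + 1$ then yields the strict inequality $\delta(x_m, x_n) < M$ required by the definition of boundedness. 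There is no real obstacle in this argument; the only point demanding care is remembering that the self-distance $\delta(x_0, x_0)$ can be strictly positive in a metric-like space and must therefore be included in the bounding constant.
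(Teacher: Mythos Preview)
Your argument is correct and is exactly the standard one the authors presumably had in mind; the paper itself simply writes ``Proof is trivial so omitted.'' The only nuance---that $\delta(x_0,x_0)$ need not vanish---is one you explicitly handled, so there is nothing to add.
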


\begin{proof}
Proof is trivial so omitted.
\end{proof}

We now site an example of a sequence in a metric-like space which is statistically convergent
but not usually convergent.

\begin{Example}
Let $\mathcal{X} = \mathbb{R}_{\geq 0}$ and a mapping $\delta : \mathcal{X} \times \mathcal{X}\rightarrow
 \mathbb{R}_{\geq 0}$ be defined by
\begin{align*}
\delta(x,y) = \begin{cases}
0, &\mbox{if}~ x = y \\
x + y, &\mbox{otherwise}.
\end{cases}
\end{align*}
Then $(\mathcal{X},\delta)$ is a metric-like space. Let us consider a sequence $\{x_{n}\}_{n\in \mathbb{N}}$
in $\mathcal{X}$ be defined as follows:
\begin{align*}
x_{n} = \begin{cases}
k, &\mbox{if}~ n = k^{2} ~(k = 1, 2,\ldots) \\
2, &\mbox{otherwise}.
\end{cases}
\end{align*}
Let $\epsilon > 0$ be given. Then
\begin{align*}
\delta(x_{n},2) = \begin{cases}
k + 2, &\mbox{if}~ n = k^{2} ~(k\in \mathbb{N}, k \neq 2) \\
0, &\mbox{otherwise}.
\end{cases}
\end{align*}
Therefore $\{n\in \mathbb{N}: \left|\delta(x_{n},2) - \delta(2,2)\right|\geq \epsilon\} \subset
\{1^{2}, 2^{2}, \ldots\}$. As $d(\{1^{2}, 2^{2}, \ldots\}) = 0$, therefore $d(\{n\in \mathbb{N}: \left|
\delta(x_{n},2) - \delta(2,2)\right| \geq \epsilon\}) = 0$. Thus
\begin{align*}
st-\displaystyle{\lim_{n\rightarrow \infty}} \delta(x_{n},2) = \delta(2,2),
~\mbox{and so}~ st-\displaystyle{\lim_{n\rightarrow \infty}} x_{n} = 2.
\end{align*}
Let $M > 0$. Then there exists $k\in \mathbb{N}$ such that $k \leq M < (k + 1)$. Therefore
\begin{align*}
\delta(x_{k^{2}},x_{1}) = k + 1 > M.
\end{align*}
So, the sequence $\{x_{n}\}_{n\in \mathbb{N}}$ is unbounded in $(\mathcal{X},\delta)$ and hence using
Theorem \ref{ Theo 1} it is not usually convergent in $(\mathcal{X},\delta)$.
\end{Example}

Thus the notion of statistical convergence in a metric-like space is a natural generalization of the
usual notion of convergence of sequences.

\begin{thm} \label{ Theo 3}
Let $(\mathcal{X},\delta)$ be a metric-like space and $\{x_{n}\}_{n\in \mathbb{N}}$ be a sequence in
$\mathcal{X}$ and $x_{o} \in \mathcal{X}$. Then $st-\displaystyle {\lim_{n\rightarrow \infty}} x_{n} = x_{o}$
if and only if there exists a subsequence $\{x_{n_{k}}\}_{k\in \mathbb{N}}$ of $\{x_{n}\}_{n\in \mathbb{N}}$ such that
$d(\{n_{1}, n_{2}, n_{3}, \ldots\}) = 1$ and
$\displaystyle{\lim_{k\rightarrow \infty}} x_{n_{k}} = x_{o}$.
\end{thm}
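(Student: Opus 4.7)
The plan is to prove this via the standard two-direction argument, with the reverse direction being essentially routine and the forward direction requiring the classical ``density-one subsequence'' construction adapted to the metric-like setting.

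For the sufficiency direction, I would assume we have a subsequence $\{x_{n_k}\}$ with $d(\{n_1, n_2,\ldots\}) = 1$ and $\lim_{k \to \infty} x_{n_k} = x_o$. By the definition of convergence in $(\mathcal{X},\delta)$, for any $\epsilon > 0$ the set $\{k : |\delta(x_{n_k}, x_o) - \delta(x_o, x_o)| \geq \epsilon\}$ is finite. Therefore the index set $\mathcal{A}(\epsilon) = \{n \in \mathbb{N} : |\delta(x_n, x_o) - \delta(x_o, x_o)| \geq \epsilon\}$ is contained in the union of $\mathbb{N} \setminus \{n_1, n_2, \ldots\}$ (which has density $0$) together with a finite set. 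Hence $d(\mathcal{A}(\epsilon)) = 0$, giving statistical convergence.

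For the necessity direction, which is the main part of the argument, I would employ the classical stratification technique. For each $j \in \mathbb{N}$, set
\begin{align*}
K_j = \{n \in \mathbb{N} : |\delta(x_n, x_o) - \delta(x_o, x_o)| \geq 1/j\}, \qquad M_j = \mathbb{N} \setminus K_j.
\end{align*}
By hypothesis $d(K_j) = 0$, so $d(M_j) = 1$, and the $M_j$ form a decreasing nested family. I would then choose a strictly increasing sequence $t_1 < t_2 < \cdots$ of positive integers such that for every $n \geq t_j$ the density ratio $|M_j \cap \{1,\ldots,n\}|/n > 1 - 1/j$. Finally, define
\begin{align*}
M = \bigl\{n \in \mathbb{N} : t_j \leq n < t_{j+1} \text{ and } n \in M_j, \text{ for some } j\bigr\},
\end{align*}
and let $\{n_k\}_{k \in \mathbb{N}}$ be the increasing enumeration of $M$.

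The verification splits into two claims. First, $d(M) = 1$: for any $n$ with $t_j \leq n < t_{j+1}$, one notes $M \cap \{1,\ldots,n\} \supseteq M_j \cap \{t_j, \ldots, n\}$, and combining this with the density estimate for $M_j$ shows the ratio $|M \cap \{1,\ldots,n\}|/n$ tends to $1$. Second, $\lim_k x_{n_k} = x_o$: given $\epsilon > 0$, pick $j$ with $1/j < \epsilon$; for any $n \in M$ with $n \geq t_j$, we have $t_\ell \leq n < t_{\ell+1}$ for some $\ell \geq j$, so $n \in M_\ell \subseteq M_j$, whence $|\delta(x_n, x_o) - \delta(x_o, x_o)| < 1/j < \epsilon$.

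The main obstacle will be the bookkeeping in the density count for $M$: one must be careful that the piecewise construction on blocks $[t_j, t_{j+1})$ actually yields a set of density one and not merely a set whose lower density is one along some subsequence. Controlling the denominator $n$ uniformly as it sweeps across an interval $[t_j, t_{j+1})$, rather than only at the endpoints, is the point requiring the explicit $t_j$'s chosen above. Note that this part of the argument is purely arithmetic in $\mathbb{N}$ and does not use any metric structure beyond the hypothesis $d(K_j) = 0$, so the metric-like framework introduces no additional complication here.
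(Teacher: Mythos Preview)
Your proposal is correct and follows essentially the same route as the paper: the paper's sets $\mathcal{P}_j$, thresholds $v_j$, and index set $K$ are exactly your $M_j$, $t_j$, and $M$, with the same block construction and the same two-step verification (density one, then ordinary convergence along the set). One small slip to fix when you write it up: the containment you state, $M \cap \{1,\ldots,n\} \supseteq M_j \cap \{t_j,\ldots,n\}$, is by itself too weak when $n$ is near $t_j$; the nesting $M_1 \supseteq M_2 \supseteq \cdots$ actually yields the stronger inclusion $M \cap \{1,\ldots,n\} \supseteq M_j \cap \{t_1,\ldots,n\}$, and it is this (precisely what the paper uses) that makes the density estimate $|M \cap \{1,\ldots,n\}|/n > 1 - 1/j - (t_1-1)/n$ go through uniformly on the block $[t_j,t_{j+1})$.
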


\begin{proof}
First let there exists a subsequence $\{x_{n_{k}}\}_{k\in \mathbb{N}}$ of $\{x_{n}\}_{n\in \mathbb{N}}$ such that
$d(\{n_{1},\\ n_{2}, n_{3}, \dots\})$ = 1
and $\displaystyle{\lim_{k\rightarrow \infty}} x_{n_{k}} = x_{o}$.

Let $\epsilon > 0$ be given. Then there exists $k_{o}\in \mathbb{N}$ such that
\begin{align} \label{ eq 3}
\left|\delta(x_{n_{k}},x_{o}) - \delta(x_{o},x_{o})\right| < \epsilon, ~\forall~ k \geq k_{o}.
\end{align}
Let $\mathcal{A}(\epsilon) = \{n\in \mathbb{N}: \left|\delta(x_{n},x_{o}) - \delta(x_{o},x_{o})\right| \geq \epsilon\}$.
Then from (\ref{ eq 3}) we have $\mathcal{A}(\epsilon) \subset \mathbb{N} - \{n_{k}: k \geq k_{o}\}$.
Since $d(\{n_{k}: k \geq k_{o}\}) = 1$ therefore
$d(\mathbb{N} - \{n_{k}: k \geq k_{o}\}) = 0$ and hence $d(\mathcal{A}(\epsilon)) = 0$. Thus,
\begin{align*}
st-\displaystyle{\lim_{n\rightarrow \infty}} x_{n} = x_{o}.
\end{align*}

Conversely, let $st-\displaystyle{\lim_{n\rightarrow \infty}} x_{n} = x_{o}$.
For any $j\in \mathbb{N}$ we set $\mathcal{P}_{j} = \{n\in \mathbb{N}: \left|\delta(x_{n},x_{o}) -
\delta(x_{o},x_{o})\right| < \frac{1}{j}\}$. Since $st-\displaystyle
{\lim_{n\rightarrow \infty}}x_{n} = x_{o}$ we have
$d(\mathcal{P}_{j}) = 1, ~\forall~ j\in \mathbb{N}$.
Therefore each $\mathcal{P}_{j}$ is an infinite subset of $\mathbb{N}$ and
$\mathcal{P}_{1} \supset \mathcal{P}_{2} \supset \ldots$.

As $d(\mathcal{P}_{1}) = 1$, so $\left|\{m \leq n: m\in \mathcal{P}_{1}\}\right|/n > 0, ~\forall~
n \geq v_{1}$, where $v_{1}$ is the least element of $\mathcal{P}_{1}$.
Again, since $d(\mathcal{P}_{2}) = 1, ~\exists~ v_{2}\in \mathcal{P}_{2}$ with $v_{2} > v_{1}$ such that
\begin{align*}
\frac{\left|\{m \leq n: m\in \mathcal{P}_{2}\}\right|}{n} > \frac{1}{2}, ~\forall~ n \geq v_{2}.
\end{align*}
Similarly, there exists $v_{3}\in \mathcal{P}_{3}$ with $v_{3} > v_{2}$ such that
\begin{align*}
\frac{\left|\{m \leq n: m\in \mathcal{P}_{3}\}\right|}{n} > \frac{2}{3}, ~\forall~ n \geq v_{3}.
\end{align*}
Proceeding in this way we get a sequence $\{v_{1}< v_{2}< v_{3} < \ldots\}$ of natural numbers such that
\begin{align} \label{ eq 4}
\frac{\left|\{m \leq n: m\in \mathcal{P}_{j}\}\right|}{n} > \frac{j-1}{j}, ~\forall~ n \geq v_{j}.
\end{align}
Let $K = (\left[1, v_{1}\right] \cap \mathbb{N}) \cup (\displaystyle{\bigcup^{\infty}_{j = 1}}\left[v_{j}, v_{j+1}\right] \cap
\mathcal{P}_{j})$. Then $K \subset \mathbb{N}$ and using (\ref{ eq 4}) we have for any $n \in \mathbb{N}$ with
$v_{j} \leq n < v_{j+1}$,
\begin{align*}
\frac{\left|\{m \leq n: m\in K\}\right|}{n} \geq \frac{\left|\{m \leq n: m\in \mathcal{P}_{j}\}\right|}{n} > \frac{j-1}{j}.
\end{align*}
Therefore, $\displaystyle{\lim_{n\rightarrow \infty}} \frac{\left|\{m \leq n: m\in K\}\right|}{n} = 1$ i.e. $d(K) = 1$.

Let $\epsilon > 0$ be given. Then $\exists~ l\in \mathbb{N}$ such that $\frac{1}{l} < \epsilon$. Choose $n\in K$ with $n \geq v_{l}$.
As $n \in K, ~\exists~ j \in \mathbb{N}$ with $j \geq l$ such that $n \in \left[v_{j}, v_{j+1}\right] \cap \mathcal{P}_{j}$. Then
\begin{align*}
& \left|\delta(x_{n},x_{o}) - \delta(x_{o},x_{o})\right| < \frac{1}{j} \leq \frac{1}{l} < \epsilon. \\
\mbox{Thus,}~ & \left|\delta(x_{n},x_{o}) - \delta(x_{o},x_{o})\right| < \epsilon, ~\forall~ n\in K ~\mbox{with}~ n \geq v_{l}.
\end{align*}
This implies,
$\displaystyle{\lim_{n\rightarrow \infty \atop n\in K}} \delta(x_{n},x_{o}) = \delta(x_{o},x_{o})$. Therefore
writing $K = \{n_{1}, n_{2}, n_{3},\ldots\}$ we have $d(K) = 1$ and $\{x_{n_{k}}\}_{k\in \mathbb{N}}$ is a
subsequence of $\{x_{n}\}_{n\in \mathbb{N}}$ such that
$\displaystyle{\lim_{k\rightarrow \infty}} x_{n_{k}} = x_{o}$.
\end{proof}

\begin{thm} \label{ Theo 4}
Let $\{x_{n}\}_{n\in \mathbb{N}}$ be a sequence in a metric-like space $(\mathcal{X},\delta)$ such that
$st-\displaystyle{\lim_{n\rightarrow \infty}} x_{n} = x_{o} ~(\in \mathcal{X})$ and $\delta(x_{o},x_{o}) = 0$.
Then $st-\displaystyle{\lim_{n\rightarrow \infty}} \delta(x_{n},y) = \delta(x_{o},y)$ for all $y\in \mathcal{X}$.
\end{thm}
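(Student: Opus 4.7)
The plan is to reduce the conclusion to the hypothesis by means of a pointwise inequality, so that nothing involving densities beyond monotonicity is required.

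First I would unpack what the two limits mean. Using the definition of statistical convergence in $(\mathcal{X}, \delta)$, the hypothesis $st\text{-}\lim_{n\to\infty} x_n = x_o$ together with $\delta(x_o, x_o) = 0$ says exactly that $d(\{n \in \mathbb{N} : \delta(x_n, x_o) \geq \epsilon\}) = 0$ for every $\epsilon > 0$. What I need to show is that for every $\epsilon > 0$ and every fixed $y \in \mathcal{X}$,
\begin{align*}
d\bigl(\{n \in \mathbb{N} : |\delta(x_n, y) - \delta(x_o, y)| \geq \epsilon\}\bigr) = 0.
\end{align*}

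The central step is a reverse-triangle-type estimate. Using $(\delta 2)$ and $(\delta 3)$ twice, I get $\delta(x_n, y) \leq \delta(x_n, x_o) + \delta(x_o, y)$ and $\delta(x_o, y) \leq \delta(x_o, x_n) + \delta(x_n, y) = \delta(x_n, x_o) + \delta(x_n, y)$. Subtracting yields
\begin{align*}
|\delta(x_n, y) - \delta(x_o, y)| \leq \delta(x_n, x_o), \quad \forall\, n \in \mathbb{N}.
\end{align*}
Combined with $\delta(x_o, x_o) = 0$, this gives the inclusion
\begin{align*}
\{n : |\delta(x_n, y) - \delta(x_o, y)| \geq \epsilon\} \subset \{n : |\delta(x_n, x_o) - \delta(x_o, x_o)| \geq \epsilon\}.
\end{align*}

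Finally, I would invoke monotonicity of natural density: the set on the right has density $0$ by hypothesis, so the set on the left does too. This gives $d(\{n : |\delta(x_n, y) - \delta(x_o, y)| \geq \epsilon\}) = 0$ for every $\epsilon > 0$, which is exactly $st\text{-}\lim_{n\to\infty} \delta(x_n, y) = \delta(x_o, y)$. There is no serious obstacle here; the only subtlety is the essential use of the assumption $\delta(x_o, x_o) = 0$, without which the set $\{n : \delta(x_n, x_o) \geq \epsilon\}$ need not be controlled by the statistical convergence of $\{x_n\}$, and the inclusion above would fail.
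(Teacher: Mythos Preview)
Your proof is correct and follows essentially the same route as the paper: both establish the reverse triangle inequality $|\delta(x_n,y)-\delta(x_o,y)|\leq \delta(x_n,x_o)$ via two applications of $(\delta3)$ and then use it, together with $\delta(x_o,x_o)=0$, to compare the relevant index sets. The only cosmetic difference is that the paper phrases the density argument in terms of the complementary sets of density $1$, whereas you work directly with the density-$0$ sets.
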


\begin{proof}
As $st-\displaystyle{\lim_{n\rightarrow \infty}} x_{n} = x_{o}$ and $\delta(x_{o},x_{o}) = 0$ therefore $st-\displaystyle
{\lim_{n\rightarrow \infty}} \delta(x_{n},x_{o}) = 0$. Now let $\epsilon > 0$ be given. Then $d(\{n\in \mathbb{N}: \delta(x_{n},
x_{o}) < \epsilon\}) = 1$. We set, $\mathcal{A}(\epsilon) = \{n\in \mathbb{N}: \delta(x_{n},x_{o}) < \epsilon\}$. Let $y \in
\mathcal{X}$ and $n \in \mathbb{N}$. Then,
\begin{align} \label{ eq 5}
& \delta(x_{n},y) \leq \delta(x_{n},x_{o}) + \delta(x_{o},y) \nonumber \\
\Rightarrow~ & \delta(x_{n},y) - \delta(x_{o},y) \leq \delta(x_{n},x_{o}).
\end{align}
Also,
\begin{align} \label{ eq 6}
& \delta(x_{o},y) \leq \delta(x_{o},x_{n}) + \delta(x_{n},y) \nonumber \\
\Rightarrow~ & - \delta(x_{n},x_{o}) \leq \delta(x_{n},y) - \delta(x_{o},y).
\end{align}
Then from (\ref{ eq 5}) and (\ref{ eq 6}) we have $\left|\delta(x_{n},y) - \delta(x_{o},y)\right|
\leq \delta(x_{n},x_{o})$. Therefore for all $n \in \mathcal{A}(\epsilon)$,
\begin{align*}
\left|\delta(x_{n},y) - \delta(x_{o},y)\right| \leq \delta(x_{n},x_{o}) < \epsilon.
\end{align*}
This implies, $\mathcal{A}(\epsilon) \subset \{n\in \mathbb{N}: \left|\delta(x_{n},y) - \delta(x_{o},y)\right| < \epsilon \}$.
As $d(\mathcal{A(\epsilon)}) = 1$, so $d(\{n\in \mathbb{N}: \left|\delta(x_{n},y) - \delta(x_{o},y)\right| < \epsilon\}) = 1$.
Hence, $st-\displaystyle{\lim_{n\rightarrow \infty}} \delta(x_{n},y) = \delta(x_{o},y), ~\forall~ y\in \mathcal{X}$.
\end{proof}


\section{\textbf{Statistical Cauchyness in a metric-like space}}

In this section using the notion of double natural density ( see \cite{Mur}) we introduce the
notion of statistical Cauchy sequences in a metric-like space.

\begin{defn} \cite{Pr}:
A double sequence $\{x_{jk}\}_{j,k\in \mathbb{N}}$ of real numbers is said to converge to a
real number $l$ in \textit{Pringsheim's sense}, if for every $\epsilon > 0, ~\exists~ k_{o}\in
\mathbb{N}$ such that
\begin{align*}
\left|x_{jk} - l\right| < \epsilon, ~\forall~ j, k \geq k_{o}.
\end{align*}
In this case we say $l$ is a limit of the sequence $\{x_{jk}\}_{j, k\in \mathbb{N}}$.
\end{defn}

\begin{defn} \cite{Mur}
Let $K \subset \mathbb{N \times N}$ and $K(m,n)$ denote the cardinality of the set
$\{(i,j) \in K: i \leq m, j \leq n\}$. We say that the set $K$ has double natural density
$d_{2}(K)$, if the sequence $\{\frac{K(m,n)}{mn}\}$ has a limit in \textit{Pringsheim's sense},
and
\begin{align*}
d_{2}(K) = \displaystyle{\lim_{m,n \rightarrow \infty}} \frac{K(m,n)}{mn}.
\end{align*}
\end{defn}

\begin{defn}
A sequence $\{x_{n}\}_{n \in \mathbb{N}}$ in a metric-like space $(\mathcal{X},\delta)$ is said to be statistically
Cauchy if there exists $l \geq 0$ such that $st-\displaystyle{\lim_{m, n\rightarrow \infty}} \delta(x_{m}
,x_{n}) = l$, i.e. for every $\epsilon > 0, ~d_{2}(\mathcal{A}(\epsilon)) = 0$, where
\begin{align*}
\mathcal{A}(\epsilon) = \{(m, n)\in \mathbb{N \times N}: \left|\delta(x_{m},x_{n}) - l\right| \geq \epsilon\}.
\end{align*}
\end{defn}

\begin{rem}
In a metric-like space statistically Cauchy sequences may not be statistically convergent.
To show this we consider the following example.
\end{rem}

\begin{Example}
Let $\mathcal{X}$ denotes the set of all positive real numbers and $\delta: \mathcal{X \times X} \rightarrow
\mathbb{R}_{\geq 0}$ be defined as follows:
\begin{align*}
\delta(x,y) = \begin{cases}
0, & ~\mbox{if}~ x = y ~\mbox{and}~ x ~\mbox{is irrational} \\
x + y, & ~\mbox{otherwise}.
\end{cases}
\end{align*}
Then $(\mathcal{X},\delta)$ is a metric-like space but not a metric space. Now we
consider a sequence $\{x_{n}\}_{n\in \mathbb{N}}$ in $\mathcal{X}$, be defined as follows:
\begin{align*}
x_{n} = \begin{cases}
k, & \mbox{if}~ n = k^{2} ~(k = 1, 2,\ldots) \\
\frac{1}{n}, & \mbox{otherwise}.
\end{cases}
\end{align*}
Then,
\begin{align*}
\delta(x_{m},x_{n}) = \begin{cases}
k_{1} + k_{2}, & \mbox{if}~ m = k^{2}_{1}, n = k^{2}_{2} ~\mbox{ for some}~ k_{1}, k_{2} \in \mathbb{N} \\
k + \frac{1}{n}, & \mbox{if}~ m = k^{2}, ~\mbox{for some}~ k\in \mathbb{N} ~\mbox{and n is not a perfect square} \\
\frac{1}{m} + \frac{1}{n}, & \mbox{ if none of m, n is a perfect square}.
\end{cases}
\end{align*}
Let $\epsilon > 0$ be given. Then there exists $k_{o} \in \mathbb{N}$ such that
\begin{align*}
\frac{1}{m} + \frac{1}{n} < \epsilon, ~\forall~ m, n \geq k_{o}.
\end{align*}
This implies,
\begin{align*}
\{(m,n)\in \mathbb{N \times N} : \left|\delta(x_{m},x_{n}) - 0\right| \geq \epsilon\} &\subset (\mathcal{P \times P})
 \cup (\mathcal{P} \times \mathcal{P}^{c})\\
 & \cup \{(m,n)\in \mathbb{N \times N}: m < k_{o}~or~ n < k_{o}\},
\end{align*}
where $\mathcal{P}$ denotes the set of all perfect square natural numbers and $\mathcal{P}^{c}$ denotes it's
complement in $\mathbb{N}$.

Since $d_{2}(\mathcal{P \times P}) = 0, ~d_{2}(\mathcal{P} \times \mathcal{P}^{c}) = 0$ and
$d_{2}(\{(m,n)\in \mathbb{N \times N}: m < k_{o} ~or~ n < k_{o}\}) = 0$ so,
\begin{align*}
& d_{2}(\{(m,n)\in \mathbb{N \times N}: \left|\delta(x_{m},x_{n}) - 0\right|\geq \epsilon\}) = 0 \\
\Rightarrow~ & st-\displaystyle{\lim_{m,n \rightarrow \infty}} \delta(x_{m},x_{n}) = 0.
\end{align*}
This implies that the sequence $\{x_{n}\}_{n\in \mathbb{N}}$ is statistically Cauchy in $\mathcal{X}$.
But the sequence is not statistically convergent to any point of $\mathcal{X}$.
\end{Example}


{\bf Acknowledgement : } The second author is grateful to the University Grants Commission, India for
financial support under UGC-JRF scheme during the preparation of this paper.


\begin{thebibliography}{99}


\bibitem{Am} Amini-Harandi, A. \textit{ Metric-like spaces, partial metric spaces and fixed points },
Fixed Point Theory and Applications, \textbf{204} (2012) 1-11.



\bibitem{Fa} Fast, H. \textit{ Sur la convergence statistique }, Colloq. Math., \textbf{2} (1951), 241-244.

\bibitem{Fr1} Fridy, J. A. \textit{ On Statistical convergence }, Analysis, \textbf{5} (1985), 301-303.

\bibitem{Fr2} Fridy, J. A. \textit{ Statistical Limit Points }, Proc. Amer. Math. Soc., \textbf{118} (1993), 4, 1187-1192.

\bibitem{Ko} Kostyrko, P., Ma$\check{c}$aj, M., $\check{S}$al$\acute{a}$t, T. and Strauch, O. \textit{ On
Statistical Limit Points }, Proc. Amer. Math. Soc., \textbf{129} (2001), 9, 2647-2654.

\bibitem{Matt} Matthews, S. G. \textit{ Partial metric topology }, Ann. New York Acad. Sci., \textbf{728} (1994), 183-197.

\bibitem{Mur} Mursaleen, M. and Edely, O.H.H. \textit{Statistical convergence of double sequences},Journal of Mathematical
Analysis and Applications, \textbf{288} (2003), 223-231.

\bibitem{Nu} Nuray, F. \textit{ Statistical convergence in Partial metric spaces }, Korean J. Math., \textbf{30} (2022)
, 1, 155-160.


\bibitem{Pr} Pringsheim, A., \textit{Zur Theorie der zweifach unendlichen Zahlenfolgen}, Math. Ann. \textbf{53}
(1900) 289 - 321.

\bibitem{Sl} $\check{S}$al$\acute{a}$t, T. \textit{ On Statistical Convergent Sequences of Real Numbers},
Mathematica Slovaca., \textbf{30}, No.2, (1980), 139-150.

\bibitem{Sc} Schoenberg, I. J. \textit{ The Integrability of certain functions and related summability methods},
Amer. Math. Monthly., \textbf{66} (1959), 361-375.
\end{thebibliography}
\end{document}